\newtheorem{Proposition}{Proposition}
\newtheorem{Theorem}[Proposition]{Theorem}
\newtheorem{Corollary}[Proposition]{Corollary}
\newtheorem{Lemma}[Proposition]{Lemma}
\newtheorem*{thm*}{Theorem}
\numberwithin{equation}{section}
\DeclareMathOperator{\GL}{GL}
\DeclareMathOperator{\gr}{gr}
\DeclareMathOperator{\Hom}{Hom}
\DeclareMathOperator{\Ind}{Ind}
\DeclareMathOperator{\Lie}{Lie}
\DeclareMathOperator{\Rep}{Rep}
\newcommand{\bbP}{{\mathbb P}}
\newcommand{\bbZ}{{\mathbb Z}}
\newcommand{\frd}{{\mathfrak d}}
\newcommand{\frg}{{\mathfrak g}}
\newcommand{\frp}{{\mathfrak p}}
\newcommand{\cE}{{\mathcal E}}
\newcommand{\cF}{{\mathcal F}}
\newcommand{\cL}{{\mathcal L}}
\newcommand{\cO}{{\mathcal O}}
\newcommand{\cX}{{\mathcal X}}
\newcommand{\cY}{{\mathcal Y}}
\newcommand{\Qp}{\mathbb Q_p} 
\newcommand{\Z}{\mathbb Z}
\renewcommand{\P}{\mathbb P}
\DeclareMathOperator{\alg}{alg}
\newcommand{\Oa}{\cO_{\alg}}
\begin{document}

\title{The de Rham cohomology of Drinfeld's half space}
\author{Sascha Orlik}
\address{Fachgruppe Mathematik, Bergische Universit\"at Wuppertal, Gau{\ss}{}stra\ss{}e 20,
D-42119 Wuppertal,  Germany}
\email{orlik@math.uni-wuppertal.de}
\date{}

\begin{abstract}
Let $\cX \subset \P_K^d$ be Drinfeld's  half space over a $p$-adic field $K$. The de Rham cohomology of $\cX$ was first
computed by Schneider and Stuhler \cite{SS}. Afterwards there were given different proofs by Alon, de Shalit, Iovita and 
Spiess \cite{AdS,dS,IS}. This paper presents yet another approach for the determination of these invariants by analysing the de 
Rham complex of $\cX$ from the viewpoint of results given  in \cite{O}, \cite{OS}. Moreover, we treat as a generalization
the dual BGG complex of a given algebraic representation in the sense of Faltings \cite{Fa} respectively 
Schneider \cite{S}. 
\end{abstract} 

\maketitle

\normalsize

\section{Introduction}
Let $p$ be a prime number and let $K$ be a finite extension of the field of $p$-adic numbers $\Qp$.
We denote by $\cX=\cX_K^{(d+1)}=\mathbb \P_K^d\setminus \bigcup\nolimits_{H \varsubsetneq K^{d+1}}\mathbb \P(H)$ 
(the complement of all  $K$-rational hyperplanes in projective space)
Drinfeld's half space  \cite{D} of dimension $d\geq1$ over $K$. It is a rigid analytic variety over $K$ which is equipped with an action of
the $p$-adic Lie group  $G={\rm GL}_{d+1}(K)$.
In \cite{SS} Schneider and Stuhler determined the cohomology of $\cX$ for any ``good'' cohomology theory 
(e.g. the \'etale and the de Rham cohomology) as $G$-representations. Here they make only use of the ``good'' properties as homotopy invariance, existence of a product structure 
etc.  It turns out that the de Rham cohomology is given by
\begin{equation}\label{cohomology}
 H^\ast_{\rm dR}(\cX)=\bigoplus_{i=0}^d \Hom_K(v^G_{P_{(d+1-i,1,\ldots,1)}},K)[-i].
\end{equation}
Here $P_{(d+1-i,1,\ldots,1)}$ is the (lower) standard parabolic subgroup of $G$ which corresponds to the decomposition 
$(d+1-i,1,\ldots,1)$ of $d+1.$ Further for a parabolic subgroup $P\subset G$, the smooth generalized Steinberg representation 
$v^G_P$ is the unique irreducible quotient of the smooth unnormalized  induced representation $i^G_P={\rm ind}^G_P(K)$ with 
respect to the trivial $P$-representation \cite{BW,Ca}. A few years later there were given different proofs of this result
by Alon, de Shalit, Iovita and Spiess \cite{AdS,dS,IS} by relating
differential forms on $\cX$ with harmonic cochains on the Bruhat-Tits building of $G$ and considering logarithmic forms, respectively. 

In this short notice we explain how we can determine the de Rham cohomology of $\cX$ from its de Rham complex
\begin{equation}\label{deRham}
 \Omega^\bullet(\cX):\, 0 \to \cO(\cX) \to \Omega^1(\cX) \to \cdots \to \Omega^d(\cX) \to 0
\end{equation}
by applying some recent results given in \cite{O,OS}.
Here for $i=0,\ldots,d$, the expression $\Omega^i(\cX)=H^0(\cX, \Omega^i)$ is the space  of $\cX$-valued sections of the usual homogeneous vector bundle $\Omega^i$ on projective space $\P^d_K.$  
Further the de Rham cohomology of $\cX$ is the ordinary cohomology of the above complex since $\cX$ is a Stein space.
In contrast to the generalized Steinberg representations $v^G_P$ the contributions $\Omega^i(\cX)$ in the de Rham complex   are 
much bigger objects. Indeed  they are reflexive $K$-Fr\'echet spaces with a continuous $G$-action \cite{ST1}. 
Their strong duals $\Omega^i(\cX)'$, $i=0,\ldots,d,$ (i.e., the $K$-vector space of continuous linear forms
equipped with the strong topology of bounded convergence) are locally analytic $G$-representations in the sense of Schneider and Teitelbaum \cite{ST2}.
More generally, the same holds true for arbitrary homogeneous vector bundles on $\P_K.$
In \cite{O} there is constructed  for any such homogeneous vector bundle $\cE$,  a decreasing filtration by closed $G$-stable subspaces
\begin{equation}\label{filtration}
 \cE(\cX)^0 \supset \cE(\cX)^{1} \supset \cdots \supset \cE(\cX)^{d-1} \supset \cE(\cX)^d =  H^0(\P^d,\cE)
\end{equation}
on $\cE(\cX)^0=\cE(\cX).$ As we will see in the next section the filtration behaves functorially in $\cE.$ Hence we get a filtered de Rham complex
\begin{equation}\label{filtered_deRham}
 \big(0 \to \cO(\cX)^j \to \Omega^1(\cX)^j  \cdots \to  \Omega^d(\cX)^j  \to  0\big)_{j=0,\ldots,d}\, .
\end{equation}
In this paper we analyse its induced spectral sequence $$E^{p,q}_0=\gr^p(\Omega^{p+q}(\cX)) \Rightarrow H^{p+q}(\Omega^\bullet(\cX)),$$
cf. \cite{EGAIII}. The main theorem of this paper is the following result.
\begin{Theorem}
 The spectral sequence $E_0$ attached to the filtered de Rham complex (\ref{filtered_deRham}) degenerates at $E_1$ and 
 yields  the cohomology 
 formula (\ref{cohomology}).
\end{Theorem}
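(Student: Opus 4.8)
The plan is to work throughout on the dual side, where the terms of the de Rham complex become the locally analytic $G$-representations $\Omega^i(\cX)'$, and to exploit the functorial filtration (\ref{filtration}) together with the explicit description of its graded pieces furnished by \cite{O,OS}. Since the spaces $\Omega^i(\cX)$ are reflexive Fr\'echet spaces and the filtration is by closed $G$-stable subspaces, strong duality is exact on the relevant category; hence dualizing (\ref{filtered_deRham}) turns the $E_0$-page into a direct sum, over parabolic subgroups $P$, of complexes of Orlik--Strauch representations $\cF^G_P(\,\cdot\,,\,\cdot\,)$, with $d_0$ induced by the de Rham differential. The first task is therefore to identify, for each fixed filtration index $p$, the dual row complex $\bigl(\gr^p\Omega^\bullet(\cX)\bigr)'$ with the image under the exact functor $\cF^G_P$ of a purely Lie-theoretic complex, namely the dual BGG complex attached to the trivial algebraic representation (the general coefficient system of \cite{Fa,S} being handled identically).

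Granting this identification, I would compute the $E_1$-page by using that $\cF^G_P$ is exact \cite{OS}: the cohomology of the $p$-th row is then $\cF^G_P$ applied to the cohomology of the underlying dual BGG complex. By Kostant's theorem this Lie-algebra cohomology is concentrated in a single degree and is, in that degree, an irreducible representation of the Levi $L_P$ whose associated induced representation is a generalized Steinberg representation. A bookkeeping of the Weyl-length combinatorics shows that the $p$-th row is exact except in total degree $i=d-p$, where its cohomology is the strong dual of $v^G_{P_{(p+1,1,\ldots,1)}}=v^G_{P_{(d+1-i,1,\ldots,1)}}$. In particular each row, and hence each antidiagonal, carries exactly one nonzero $E_1$-term, located at $(p,q)=(d-i,\,2i-d)$ in total degree $i$.

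The degeneration is then a formal consequence of this location. The differentials of the spectral sequence are maps $d_r\colon E_r^{p,q}\to E_r^{p+r,q-r+1}$ with $r\geq1$, so they strictly increase the filtration index $p$ while raising the total degree by $1$. But the unique surviving term in total degree $i+1$ sits at filtration index $d-(i+1)=(d-i)-1$, which is strictly smaller than the index $d-i$ of the term in total degree $i$; thus every nonzero differential would have to map a term into a zero group. Hence $d_r=0$ for all $r\geq1$, the sequence degenerates at $E_1$, and summing the surviving terms over $i=0,\ldots,d$ reproduces exactly (\ref{cohomology}). As a consistency check, the bottom row $p=d$ is literally $H^0(\P^d,\Omega^\bullet)=\bigl(K\to0\to\cdots\to0\bigr)$, giving $H^0_{\mathrm{dR}}(\cX)=K=\Hom_K(v^G_{P_{(d+1)}},K)$.

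I expect the genuine obstacle to be the $E_1$-computation of the second paragraph, and specifically two points: first, verifying that on the associated graded the de Rham differential $d_0$ is carried, under the duality and the description of \cite{O}, precisely to the differential of the dual BGG complex --- this is where the functoriality of (\ref{filtration}) in $\cE$ must be used in an essential way; and second, pinning down via Kostant's theorem both the single cohomological degree in which each row is supported and the identification of the resulting $\cF^G_P$-representation with the concrete smooth generalized Steinberg representation $v^G_{P_{(d+1-i,1,\ldots,1)}}$. Once these are in place the remaining arguments --- exactness of $\cF^G_P$ and the staircase degeneration --- are formal.
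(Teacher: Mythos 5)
Your overall skeleton --- dualize, exploit the functoriality of the filtration \eqref{filtration}, compute each row of the $E_0$-page, find one surviving term per total degree, and conclude degeneration formally --- is indeed the paper's skeleton, and your last two paragraphs are fine: your placement of the surviving terms at $(p,q)=(d-i,2i-d)$ is the correct standard-convention reading (the paper's Corollary states the $E_1$-page after a renumbering that puts everything in $q=0$), and at those positions every $d_r$, $r\geq 1$, vanishes for purely positional reasons, exactly as you argue. You also correctly locate the hard part in the $E_1$-computation. The problem is that your proposed method for that computation is wrong in substance, not just in detail.

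Two concrete points. First, by Theorem \ref{thm} the dual of a graded piece is \emph{not} an Orlik--Strauch representation: there is an extension
$0 \to v^{G}_{P_{(j+1,1,\ldots,1)}}\bigl(H^{d-j}(\P^d_K,\Omega^n)'\bigr) \to \bigl(\Omega^n(\cX)^{j}/\Omega^n(\cX)^{j+1}\bigr)' \to \cF^G_{P_{\underline{j+1}}}\bigl(\tilde{H}^{d-j}_{\P^j_K}(\P^d_K,\Omega^n),{\rm St}_{d-j}\bigr) \to 0$,
and your ``direct sum of complexes of $\cF^G_P$'s'' picture discards precisely the subrepresentation that carries the answer. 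Second, the $\cF$-part of each row is not ``concentrated in a single degree by Kostant's theorem'' --- it is acyclic in \emph{all} degrees: by exactness of $\cF^G_P$ in the first argument this reduces to the acyclicity of the complex of $U(\frg)$-modules $0\to\tilde{H}^{d-j}_{\P^j_K}(\P^d_K,\cO)\to\cdots\to\tilde{H}^{d-j}_{\P^j_K}(\P^d_K,\Omega^d)\to 0$, which the paper proves geometrically: rewriting the terms via \eqref{iso} as cokernels of $H^{d-j-1}(\P^d_K,\Omega^i)\to H^{d-j-1}(V,\Omega^i)$ with $V=\P^d_K\setminus\P^j_K=\bigcup_k D(T_k)$, running the \v{C}ech--de Rham double complex for this cover, and combining the Hodge cohomology of $\P^d_K$ \eqref{equations} with Grothendieck's comparison theorem, which gives $H^\ast_{\rm dR}(V)=\bigoplus_{i=0}^{d-j-1}K[-2i]$. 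The single-degree concentration in each row thus comes from the $v$-part, i.e.\ from $H^{d-j}(\P^d_K,\Omega^n)=0$ for $n\neq d-j$ --- Hodge theory of projective space, not Lie-algebra cohomology. Moreover, your key identification is circular: for the trivial weight the dual BGG complex \emph{is} the de Rham complex (as Section 3 of the paper notes), so ``the dual row equals $\cF^G_P$ of the dual BGG complex of the trivial representation'' has no independent content, and the row terms are in any case the local cohomology modules $\tilde{H}^{d-j}_{\P^j_K}(\P^d_K,\Omega^n)$, not BGG terms. Without a proof of the acyclicity of the $\tilde{H}$-complex --- for which you would need a genuine substitute for the paper's \v{C}ech/comparison computation --- the $E_1$-page, and hence the theorem, is not established.
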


In the final section we replace the de Rham complex by the dual BGG complex attached to an algebraic representation in the 
sense of Faltings \cite{Fa, FC} respectively 
Schneider \cite{S}. More precisely, let $\lambda\in \bbZ^{d+1}$ be a dominant weight with corresponding
irreducible algebraic representation $V(\lambda).$ Then we consider the complex
\begin{equation}\nonumber
0 \rightarrow V(\lambda) \rightarrow \cE_\lambda (\cX) \rightarrow \cE_{w_1\cdot \lambda}(\cX) \rightarrow \cdots \rightarrow
\cE_{w_d\cdot \lambda}(\cX) \rightarrow 0
\end{equation}
where the $\cE_{w_i\cdot \lambda}$ are certain homogeneous vector bundles on $\bbP^d_K$ depending
on the weight $w_i\cdot \lambda$ (For a precise description we refer to  the final section). 
It is proved in \cite{S} that it is quasi-isomorphic to the complex $\Omega^\bullet(\cX) \otimes V(\lambda).$
It coincides with the de Rham complex (\ref{deRham}) for $\lambda=0.$ In particular the determination of the cohomology of
$\cE_{\bullet\cdot \lambda}(\cX)$ is not a surprising issue. Nevertheless, we get with
the same proof:

\begin{thm*}{\rm \bf 1'.}
Let $\lambda \in X^+.$ Then  the spectral sequence $E_0$ attached to the attached filtered complex degenerates at $E_1$ 
and we get
\begin{equation}\nonumber
 H^\ast(\cE_{\bullet\cdot \lambda}(\cX))=\bigoplus_{i=0}^d \Hom_K(v^G_{P_{(d+1-i,1,\ldots,1)}},V(\lambda))[-i].
\end{equation}
\end{thm*}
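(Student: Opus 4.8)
The plan is to carry over the proof of Theorem 1 essentially verbatim, replacing the de Rham complex $\Omega^\bullet(\cX)$ by the dual BGG complex $\cE_{\bullet\cdot\lambda}(\cX)$ and tracking the coefficients $V(\lambda)$ throughout. First I would apply the functorial filtration (\ref{filtration}) to each homogeneous bundle $\cE_{w_i\cdot\lambda}$, producing a filtered complex in the shape of (\ref{filtered_deRham}). The one preliminary point requiring verification — the analogue of what is done for the de Rham differential in Section~2 — is that the BGG operators are compatible with the filtration, i.e.\ they carry $\cE_{w_i\cdot\lambda}(\cX)^j$ into $\cE_{w_{i+1}\cdot\lambda}(\cX)^j$. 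Granting this, one obtains the spectral sequence $E_0^{p,q}=\gr^p(\cE_{w_{p+q}\cdot\lambda}(\cX)) \Rightarrow H^{p+q}(\cE_{\bullet\cdot\lambda}(\cX))$, exactly as in the case $\lambda=0$.

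Next I would compute the $E_1$-page. Passing to strong duals turns each graded piece $\gr^p(\cE_{w_i\cdot\lambda}(\cX))$ into a locally analytic $G$-representation, which by the computation of \cite{O} is an induced object assembled from the algebraic datum $w_i\cdot\lambda$ (entering through the dot action of $w_i$) together with a smooth generalized Steinberg representation of a Levi factor. Taking $d_0$-cohomology along each fixed filtration degree $p$ then yields, after dualizing back, the spaces $\Hom_K(v^G_{P_{(d+1-i,1,\ldots,1)}},V(\lambda))$ sitting in total degree $i=p+q$. Here $V(\lambda)$ surfaces precisely as the algebraic part of the construction, degenerating to the trivial coefficient $K$ when $\lambda=0$ and thereby recovering the $E_1$-page of Theorem 1.

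The degeneration at $E_1$ is the decisive step, and I would deduce it exactly as in the untwisted case. Every differential $d_r$ with $r\geq1$ is a $G$-equivariant map connecting an $E_1$-term built from $v^G_{P_{(d+1-i,1,\ldots,1)}}$ to one built from the generalized Steinberg representation of a \emph{different} standard parabolic $P_{(d-i,1,\ldots,1)}$. Since distinct generalized Steinberg representations are irreducible and pairwise admit no nonzero intertwiners, and since applying $\Hom_K(-,V(\lambda))$ preserves this disjointness of irreducible constituents (the twist by the fixed finite-dimensional $V(\lambda)$ shifts every term uniformly and creates no new homomorphisms), all such maps vanish by the structural results of \cite{OS}. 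Hence $E_1=E_\infty$, and the abutment is the direct sum of the surviving terms, giving the asserted formula. I note that consistency can be cross-checked against the quasi-isomorphism $\cE_{\bullet\cdot\lambda}(\cX)\simeq \Omega^\bullet(\cX)\otimes V(\lambda)$ of \cite{S}, since $\Hom_K(v^G_P,V(\lambda))\cong \Hom_K(v^G_P,K)\otimes_K V(\lambda)$ as $G$-representations.

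The main obstacle I anticipate is not the degeneration — which is formal and literally identical to the case $\lambda=0$, this being the sense in which the result follows ``with the same proof'' — but the identification of the $E_1$-page: one must check that the dot-action weights $w_i\cdot\lambda$ feed correctly into the description of the graded pieces from \cite{O}, so that the algebraic representation appears exactly as $V(\lambda)$ inside $\Hom_K(v^G_{P},V(\lambda))$ and specializes to the de Rham answer at $\lambda=0$. Once the graded pieces and their $d_0$-cohomology are pinned down, together with the compatibility of the BGG operators with the filtration, the vanishing of the higher differentials is purely a matter of the non-existence of intertwiners between distinct generalized Steinberg representations.
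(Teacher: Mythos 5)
Your high-level plan coincides with the paper's (the proof of Theorem 1$'$ there is literally ``the same as above'' with two substitutions), and you correctly flag the compatibility of the BGG operators with the filtration (\ref{filtration}) and the identification of the $E_1$-page as the points needing work. But at the decisive point you only name the obstacle without supplying the ideas that remove it, and both of the paper's actual new inputs are absent from your proposal. First, the chain of identities (\ref{equations}) must be replaced by the Borel--Weil--Bott theorem: using the induction spectral sequence $(R^m\ind^G_P)(R^n\ind^P_B)(M) \Rightarrow R^{m+n}\ind^G_B(M)$ of \cite{Ja}, one gets $H^i(\P^d_K,\cE_{w_j\cdot\lambda})=V(\lambda)$ for $i=j$ and $0$ otherwise. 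This computation is precisely what forces each graded piece $\gr^j$ to contribute the single term $\Hom_K(v^G_{P_{(j+1,1,\ldots,1)}},V(\lambda))$, in degree $d-j$, through the factor $v^G_{P_{(j+1,1,\ldots,1)}}(H^{d-j}(\P^d_K,\cE)')$ of Theorem \ref{thm}; your statement that ``the dot-action weights $w_i\cdot\lambda$ feed correctly into the description of the graded pieces'' is a restatement of what must be proved, not a proof. Second, the acyclicity of the induced part of the graded complexes (the analogue of the Proposition in Section 2) reduces, via the exactness of $\cF^G_{P_{\underline{j+1}}}$ in the first variable \cite{OS}, to showing that the complex of $U(\frg)$-modules $\tilde{H}^{d-j}_{\P^j_K}(\P^d_K,\cE_{\bullet\cdot\lambda})$ is acyclic; for this the paper runs the same double-complex argument over $V=\P^d_K\setminus\P^j_K$ and needs the cohomology of the sections complex $\cE_{\bullet\cdot\lambda}(V)$, which it obtains from the quasi-isomorphism $\cE_{\bullet\cdot\lambda}(V)\simeq V(\lambda)\otimes\Omega^\bullet(V)$ of \cite{Fa,FC} on the Zariski-open $V$, combined with Grothendieck's comparison theorem \cite{Gr}. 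You invoke the quasi-isomorphism of \cite{S} on $\cX$ only as an a posteriori consistency check, whereas its algebraic counterpart on $V$ is an essential ingredient of the $E_1$-computation. Without these two inputs the $E_1$-page --- which you yourself identify as the main obstacle --- is not computed, and the argument is incomplete.

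A secondary divergence concerns degeneration. You treat it via the nonexistence of intertwiners between (duals of, twists by $V(\lambda)$ of) distinct generalized Steinberg representations; the paper instead gets it essentially for free from the shape of the $E_1$-page: the cohomology of $\gr^p$ is concentrated in the single degree $d-p$, so each total degree carries exactly one nonzero entry and every $d_r$ with $r\geq 1$ has vanishing source or target, which also trivializes the extension problems and yields the direct sum. Your intertwiner route could be made to work, but it requires care you do not provide (continuous $G$-equivariant maps between the strong duals $\Hom_K(v^G_P,K)\otimes_K V(\lambda)$ in the locally analytic category, not merely between the smooth representations $v^G_P$ themselves), and it is unnecessary once the $E_1$-page is correctly computed. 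The genuine gap is the one in the previous paragraph.
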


\section{The proof of Theorem 1}

We begin by recalling some terminology used in \cite{O}. The following lines are an extract of \cite[Section 1]{O}. 

We consider the action of $G$ on projection space $\P^d_K$ given by
$$g\cdot [q_0:\cdots :q_d]:=[q_0:\cdots :q_d]g^{-1} \,.$$
We fix  a  homogeneous vector bundle $\cE$ on
$\P^d_K$ and let $\frg=\Lie G$ be the Lie algebra of $G.$ Then $\cE$ is  naturally a $\frg$-module, i.e., there is a
homomorphism of Lie algebras $\frg \rightarrow {\rm End}(\cE)$  which extends to the universal enveloping algebra $U(\frg).$
Fix an integer $0 \leq j \leq d-1$ and let $$\P^j_K=V(X_{j+1},\ldots,X_{d})\subset \P^d_K$$ be the closed $K$-subvariety
defined by the vanishing of the coordinates $X_{j+1},\ldots,X_{d}.$ 
Let $P_{\underline{j+1}}$=$P_{(j+1,d-j)}\subset G$ is the (lower) standard-parabolic  subgroup attached to the decomposition $(j+1,d-j)$
of $d+1.$ It is clearly the stabilizer of $\P^j_K$ under the above action.  Both the Zariski cohomology  $H^\ast(\P^d_K\setminus \P^j_k,\cE)$ and the algebraic local cohomology 
$H^\ast_{\P^j_K}(\P^d_K,\cE)$ are thus equipped with an action of the semi-direct product
$P_{(j+1,d-j)} \ltimes U(\frg)$. Here the semi-direct product
is as usual induced by the adjoint action of $P_{(j+1,d-j)}$ on $\frg.$ 
Further the natural long exact sequence \vspace{0.3cm}
\begin{equation}\label{long_exact}
\cdots \rightarrow H^{i-1}(\P^d_K\setminus \P^j_K,\cE) \rightarrow H^i_{\P^j_K}(\P^d_K,\cE) \rightarrow H^i(\P^d_K,\cE) \rightarrow H^i(\P^d_K\setminus \P^j_K,\cE) \rightarrow \cdots
\end{equation}
is equivariant with respect to this action. By general arguments in local cohomology theory \cite{Ha2}, one deduces that
\begin{equation}\label{cases}
 H^{i}_{\P^j_K}(\P^d_K,\cE)=\left\{\begin{array}{ccc}
0 \, & ; &  i< d-j \\ \\ H^{i}(\P^d_K,\cE)\, & ;& i>d-j
 \end{array}
\right. .
\end{equation}
In the case $i=d-j,$ we have thus an exact sequence
\begin{eqnarray*}
\nonumber 0 \rightarrow &  H^{d-j-1}(\P^d_K,\cE) & \rightarrow H^{d-j-1}(\P^d_K\setminus \P^j_K,\cE) \rightarrow  H^{d-j}_{\P^j_K}(\P^d_K,\cE) \\  \rightarrow   & H^{d-j}(\P^d_K,\cE)&  \rightarrow 0.
\end{eqnarray*}
We set
\begin{eqnarray}\label{iso}
 \tilde{H}^{d-j}_{\P^j_K} (\P^d_K ,\cE) & := &{\rm ker}\,\Big(H^{d-j}_{\P^j_K}(\P^d_K,\cE) \rightarrow H^{d-j}(\P^d_K,\cE)\Big) \\
\nonumber & \cong & {\rm coker}\,\Big(H^{d-j-1}(\P^d_K,\cE) \rightarrow H^{d-j-1}(\P^d_K\setminus \P^j_K,\cE)\Big)
\end{eqnarray}
which is consequently a $P_{(j+1,d-j)} \ltimes U(\frg)$-module.

For an arbitrary parabolic subgroup $P\subset G$, let $\cO^\frp$ be the full subcategory of the category
$\cO$ (in the sense of Bernstein, Gelfand, Gelfand \cite{BGG}) consisting of $U(\frg)$-modules of type $\frp=\Lie P$.  
We let $\Oa^\frp$ be the full subcategory of $\cO^\frp$
given by objects $M$ such that all $\frp$-representations appearing in $M$ are induced by  
finite-dimensional algebraic $P$-representations, cf. \cite{OS}.

\begin{Lemma}\label{in category O} The $U(\frg)$-module $\tilde{H}^{d-j}_{\P^j_K} (\P^d_K ,\cE)$ lies in the category $\cO^{\frp_{(j+1,d-j)}}_{\alg}.$
\end{Lemma}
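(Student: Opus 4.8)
The plan is to reduce the statement to an explicit computation of the local cohomology in a standard affine chart and to read off the category $\cO$ structure directly from the resulting module of Laurent-type expansions. First I would work on the chart $D_+(X_0)=\Spec K[x_1,\ldots,x_d]$, where $x_i=X_i/X_0$ and $\cE$ trivializes, so that $\P^j_K\cap D_+(X_0)$ is the linear subspace cut out by the regular sequence $x_{j+1},\ldots,x_d$. Since $\cE$ is coherent and $\P^j_K$ has codimension $d-j$, the local cohomology is concentrated in degree $d-j$ and is computed by the top \v{C}ech cohomology of this regular sequence; after choosing a local frame of $\cE$ one obtains, writing $o=[1:0:\cdots:0]\in\P^j_K$ for the base point and $\cE_o$ for its (finite-dimensional) fibre,
\begin{equation}\nonumber
H^{d-j}_{\P^j_K}(\P^d_K,\cE)\big|_{D_+(X_0)}\;\cong\;\bigoplus_{\substack{a_1,\ldots,a_j\geq 0\\ b_{j+1},\ldots,b_d\geq 1}} K\cdot x_1^{a_1}\cdots x_j^{a_j}\, x_{j+1}^{-b_{j+1}}\cdots x_d^{-b_d}\otimes \cE_o .
\end{equation}
The Lie algebra $\frg=\mathfrak{gl}_{d+1}$ acts through the vector fields induced by the $G$-action together with a zeroth-order term coming from the $\cE$-structure; this first-order description makes the weight bookkeeping completely explicit.

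Next I would verify the axioms of category $\cO$ from this description. The diagonal torus $T$ acts with the displayed monomials as weight vectors, so the module is $\frt$-semisimple, and fixing a $T$-weight determines each exponent $a_i,b_i$ (up to the finitely many weights of $\cE_o$); hence all weight spaces are finite-dimensional. The weights are bounded above in the appropriate order, so the raising operators act locally nilpotently, giving $\frn$-local finiteness. Finite generation over $U(\frg)$ follows because the finite-dimensional space $x_{j+1}^{-1}\cdots x_d^{-1}\otimes\cE_o$ of minimal classes already generates the whole module: the operators in the Levi $\frl=\mathfrak{gl}_{j+1}\times\mathfrak{gl}_{d-j}$ produce the polynomial dependence on $x_1,\ldots,x_j$, while the nilradical lowering operators raise the negative exponents $b_{j+1},\ldots,b_d$. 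Together these place $H^{d-j}_{\P^j_K}(\P^d_K,\cE)$ in $\cO$; since moreover $\frl$ acts locally finitely and semisimply, it lies in the parabolic category $\cO^{\frp_{(j+1,d-j)}}$.

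It then remains to check the algebraicity condition, namely that the $\frp_{(j+1,d-j)}$-representations occurring are induced by finite-dimensional algebraic $P_{(j+1,d-j)}$-representations. Here I would invoke the $P_{(j+1,d-j)}\ltimes U(\frg)$-equivariance of the long exact sequence (\ref{long_exact}) recalled above: the group $P_{(j+1,d-j)}$ acts algebraically on $\cE$ and stabilizes $\P^j_K$, so its action on the local cohomology is locally finite and integrates the $\frp_{(j+1,d-j)}$-action, which is exactly the algebraicity required in \cite{OS}. Finally, since $\tilde H^{d-j}_{\P^j_K}(\P^d_K,\cE)$ is by definition a $U(\frg)$-submodule of $H^{d-j}_{\P^j_K}(\P^d_K,\cE)$ and $\cO^{\frp_{(j+1,d-j)}}_{\alg}$ is closed under subobjects, the claim follows.

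The step I expect to be the main obstacle is finite generation together with its compatibility with the algebraic structure: one must check carefully that the single space of minimal classes generates everything under $U(\frg)$, i.e. that no new families of $U(\frg)$-generators appear as the negative exponents grow, and that the vector-field action agrees with the derivative of the algebraic $P_{(j+1,d-j)}$-action precisely enough that the induced $\frp_{(j+1,d-j)}$-representations are genuinely algebraic and not merely locally finite.
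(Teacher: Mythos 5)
Your key step---the displayed identification of $H^{d-j}_{\P^j_K}(\P^d_K,\cE)$ with the full span of Laurent-type monomials $x_1^{a_1}\cdots x_j^{a_j}x_{j+1}^{-b_{j+1}}\cdots x_d^{-b_d}\otimes\cE_o$, $a_i\geq 0$, $b_k\geq 1$---is wrong for every $j\geq 1$, and the error is fatal because the subsequent verification of the category-$\cO$ axioms is carried out on this incorrect module. The point is that $\P^j_K$ is projective and is \emph{not} contained in the chart $D_+(X_0)$; it meets all of $D_+(X_0),\ldots,D_+(X_j)$. Since the local cohomology sheaf is concentrated in degree $d-j$, one has $H^{d-j}_{\P^j_K}(\P^d_K,\cE)=H^0\bigl(\P^j_K,\underline{H}^{d-j}_{\P^j_K}(\cE)\bigr)$, i.e.\ \emph{global} sections over all of $\P^j_K$, which form a proper submodule of your chart group $H^{d-j}_{\bbA^j}(\bbA^d,\cE)$: the gluing over the other charts imposes degree constraints that your basis ignores. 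Concretely, for $d=2$, $j=1$, $\cE=\cO$, the exact sequence (\ref{long_exact}) with $V=\P^2\setminus\P^1=D_+(X_2)$ gives
\begin{equation}\nonumber
H^1_{\P^1_K}(\P^2_K,\cO)\;\cong\;K[X_0/X_2,\,X_1/X_2]/K,
\end{equation}
whose image in your chart coordinates is spanned by the $x_1^ax_2^{-b}$ with $b\geq\max(a,1)$ --- not by all $a\geq 0$, $b\geq 1$. And your bigger chart module genuinely fails the properties you assert for it: its weights are not bounded above in any order adapted to $\frp_{(j+1,d-j)}$, the quadratic vector field representing the Levi root vector $E_{10}$ (namely $x_1\sum_i x_i\partial_{x_i}$) sends $x_1^ax_2^{-b}$ to $(a-b)\,x_1^{a+1}x_2^{-b}$ and hence acts with infinite-dimensional orbits on $x_1^2x_2^{-1}$, and the "minimal classes" $x_{j+1}^{-1}\cdots x_d^{-1}\otimes\cE_o$ generate only the correct (smaller) module, not your displayed one. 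So the chart module is not even an object of $\cO$, and your claimed verifications of weight-boundedness, local $\frn$-finiteness and finite generation do not go through as written. (Only for $j=0$, where $\P^0_K$ is a point lying in a single chart, is your description correct.)

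By contrast, the paper does not attempt any chart computation: it simply invokes \cite[Lemma 1.2.1]{O}, which produces a finite-dimensional \emph{algebraic} $P_{(j+1,d-j)}$-submodule generating $\tilde{H}^{d-j}_{\P^j_K}(\P^d_K,\cE)$ over $U(\frg)$; membership in $\cO^{\frp_{(j+1,d-j)}}_{\alg}$ is then immediate, since such a module is a quotient of $U(\frg)\otimes_{U(\frp_{(j+1,d-j)})}W$ with $W$ algebraic. That lemma in \cite{O} is proved via the description (\ref{iso}) of $\tilde{H}$ as the cokernel of $H^{d-j-1}(\P^d_K,\cE)\to H^{d-j-1}(\P^d_K\setminus\P^j_K,\cE)$, exploiting the global geometry of $\P^d_K\setminus\P^j_K$ (an affine-space fibration over $\P^{d-j-1}_K$) rather than a single affine chart. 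To repair your argument you would have to replace the chart-wise Čech computation by this global one (or impose and track the gluing conditions across the charts $D_+(X_0),\ldots,D_+(X_j)$), after which the weight bounds, local finiteness and finite generation you want do hold for the correct, smaller module; your final remarks on integrating the $\frp$-action to the algebraic $P_{(j+1,d-j)}$-action and on closure of $\cO^{\frp}_{\alg}$ under subobjects are reasonable, but they rest on the flawed computation.
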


\begin{proof}
 This is an easy consequence of  \cite[Lemma 1.2.1]{O} which states the existence of a finite-dimensional algebraic $P_{(j+1,d-j)}$-module 
   which generates $\tilde{H}^{d-j}_{\P^j_K} (\P^d_K ,\cE)$ as $U(\frg)$-module. 
\end{proof}

The next statement is the main result of \cite{O}. For its formulation we need some more notation.
Denote by $\Rep^{\ell a}_K(G)$ the category of locally analytic  $G$-representations with coefficients in $K.$
For a parabolic subgroup $P\subset G$, let 
$$\Ind^G_P : \Rep_K^{\ell a}(P) \to \Rep_K^{\ell a}(G)$$ be the locally analytic induction functor \cite{F}.
Let ${\rm St}_{d-j}=v^{\GL_{d-j}}_B$ be the smooth Steinberg representation of $\GL_{d-j}(K)$, $j=0,\ldots,d$. We consider ${\rm St}_{d-j}$ as a representation of 
$P_{(j+1,d-j)}$ via the trivial action of the unipotent radical of $P_{(j+1,d-j)}$  and the 
factor $\GL_{j+1}(K) \subset L_{(j+1,d-j)}$, respectively. We equip ${\rm St}_{d-j}$ with the finest locally convex topology so that it becomes 
a locally analytic $P$-representation \cite{ST2}. Thus for any algebraic representation $N$ of $P_{(j+1,d-j)}$, the tensor product
$N\otimes {\rm St}_{d-j}$ is a locally analytic representation. In particular this applies to 
the $G$-representation $H^{d-j}(\P^d_K,\cE)' \otimes v^{G}_{{P_{(j+1,1,\ldots,1)}}}$ which we also denote by
$v^{G}_{{P_{(j+1,1,\ldots,1)}}}(H^{d-j}(\P^d_K,\cE)').$
\medskip

\begin{Theorem}\label{thm}  For $j=0,\ldots,d-1,$ there are
extensions of locally analytic $G$-representa\-tions
$$0 \rightarrow v^{G}_{{P_{(j+1,1,\ldots,1)}}}(H^{d-j}(\P^d_K,\cE)') \rightarrow (\cE(\cX)^{j}/\cE(X)^{j+1})' \rightarrow  
\Ind^G_{P_{\underline{j+1}}}(U_j')^{{\frd_j}=0} \rightarrow 0. $$ 
\end{Theorem}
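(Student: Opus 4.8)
The plan is to identify, after passing to strong duals, the $j$-th graded piece of the filtration (\ref{filtration}) with the locally analytic representation attached by the functor of \cite{OS} to the local cohomology module living on the stratum $\P^j_K$. Write $P:=P_{(j+1,d-j)}$ and let $\cF^G_P(-,{\rm St}_{d-j})$ denote this (contravariant, exact) functor on $\cO^{\frp_{(j+1,d-j)}}_{\alg}$. The crux is the isomorphism
$$\big(\cE(\cX)^{j}/\cE(\cX)^{j+1}\big)' \;\cong\; \cF^G_P\big(H^{d-j}_{\P^j_K}(\P^d_K,\cE),\,{\rm St}_{d-j}\big);$$
once it is in hand, the theorem drops out by feeding the defining sequence of $\tilde H^{d-j}_{\P^j_K}$ into $\cF^G_P$ and identifying the two outer terms.

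Establishing this isomorphism is the geometric core and the main obstacle. The filtration of \cite{O} is built from the stratification of $\P^d_K$ by the $G$-translates of the linear subvarieties $\P^j_K$, and $\cE(\cX)^{j}/\cE(\cX)^{j+1}$ isolates the contributions in codimension $d-j$. I would match the polar behaviour of a section on $\cX$ along $\P^j_K$ with local cohomology classes supported on $\P^j_K$ by means of a residue isomorphism, and then invoke Schneider--Teitelbaum duality to pass to the strong dual. The delicate points here are the topological bookkeeping that guarantees the dual is a genuine locally analytic $G$-representation, and the verification that the object produced is exactly the induction $\cF^G_P(-,{\rm St}_{d-j})$; the Steinberg factor ${\rm St}_{d-j}$ records the transverse flag directions, which are pinned down only up to the Borel of the $\GL_{d-j}$-block.

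Granting the isomorphism, set $U_j:=\tilde H^{d-j}_{\P^j_K}(\P^d_K,\cE)$. The definition (\ref{iso}), together with the four-term exact sequence displayed just before it, produces a short exact sequence of $P\ltimes U(\frg)$-modules
$$0 \to U_j \to H^{d-j}_{\P^j_K}(\P^d_K,\cE) \to H^{d-j}(\P^d_K,\cE) \to 0.$$
Applying the contravariant exact functor $\cF^G_P(-,{\rm St}_{d-j})$ and reading off the middle term from the isomorphism above yields an extension
$$0 \to \cF^G_P\big(H^{d-j}(\P^d_K,\cE),{\rm St}_{d-j}\big) \to \big(\cE(\cX)^{j}/\cE(\cX)^{j+1}\big)' \to \cF^G_P\big(U_j,{\rm St}_{d-j}\big) \to 0.$$

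It remains to identify the two ends. Since $H^{d-j}(\P^d_K,\cE)$ is a finite-dimensional algebraic $G$-representation, its functor value is a $G$-equivariant algebraic twist of a smooth induction; combined with the standard identity $\ind^G_P({\rm St}_{d-j})=v^G_{P_{(j+1,1,\ldots,1)}}$ expressing a generalized Steinberg representation as the induction of the Steinberg of a Levi block, the subobject becomes $v^G_{P_{(j+1,1,\ldots,1)}}(H^{d-j}(\P^d_K,\cE)')$. For the quotient, Lemma \ref{in category O} places $U_j$ in $\cO^{\frp_{(j+1,d-j)}}_{\alg}$, so a presentation of $U_j$ by generalized Verma modules realizes $\cF^G_P(U_j,{\rm St}_{d-j})$ as the kernel of the induced differential $\frd_j$ inside a locally analytic induction, that is, $\Ind^G_{P_{\underline{j+1}}}(U_j')^{\frd_j=0}$. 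This is the asserted extension; the only non-formal ingredient is the first step, and everything downstream is a mechanical application of the dictionary of \cite{OS}.
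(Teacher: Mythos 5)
The paper itself offers no internal argument to compare against: its proof of Theorem \ref{thm} is the single line ``This is \cite[Theorem 1]{O}'' --- the statement is the main theorem of \cite{O}, imported wholesale, whose proof occupies a large part of that paper. A self-contained proof would therefore have to reconstruct that result. Your proposal instead concentrates all content into the single asserted isomorphism
$$\big(\cE(\cX)^{j}/\cE(\cX)^{j+1}\big)' \;\cong\; \cF^G_{P_{\underline{j+1}}}\big(H^{d-j}_{\P^j_K}(\P^d_K,\cE),\,{\rm St}_{d-j}\big),$$
and for this --- which you yourself call the geometric core --- you offer only ``residue isomorphism plus Schneider--Teitelbaum duality plus topological bookkeeping''. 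That is not an argument but a relabelling of the actual work in \cite{O}: the boundary complex appearing in the proof of Lemma \ref{lemma_functorial}, local duality along the strata $g\P^j_K$, the identification of duals of local cohomology groups with spaces of functions cut out by the system $\frd_j$, and the Fr\'echet-topological control of the graded pieces. The downstream steps you describe (the short exact sequence coming from (\ref{iso}), contravariance and exactness of $\cF^G_P$, Lemma \ref{in category O}) are indeed mechanical, but they are not where the theorem lives.

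More seriously, your crux isomorphism is not merely unproven: it contradicts the theorem whenever $H^{d-j}(\P^d_K,\cE)\neq 0$, which is exactly the diagonal case $\cE=\Omega^{d-j}$ that drives Theorem 1. Applying the exact contravariant functor to $0\to\tilde H^{d-j}_{\P^j_K}(\P^d_K,\cE)\to H^{d-j}_{\P^j_K}(\P^d_K,\cE)\to H^{d-j}(\P^d_K,\cE)\to 0$ makes the sub-object $\cF^G_{P_{\underline{j+1}}}(H^{d-j}(\P^d_K,\cE),{\rm St}_{d-j})$, and for a finite-dimensional algebraic $G$-module $M$ the $\frd$-condition cuts out exactly $M'\otimes i^G_P(V)$, the \emph{full} smooth induction (this is the $P\subseteq Q=G$ compatibility of the functor of \cite{OS}). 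Your ``standard identity'' $\ind^G_P({\rm St}_{d-j})=v^G_{P_{(j+1,1,\ldots,1)}}$ is false: as the introduction of this very paper recalls, $v^G_P$ is the unique irreducible \emph{quotient} of the induced representation, and $i^G_{P_{(j+1,d-j)}}(\mathbf{1}\boxtimes{\rm St}_{d-j})$ is reducible in general --- already for $d=1$, $j=0$ it is $i^{\GL_2}_{B}(K)$, of length two, whereas $v^{\GL_2}_B={\rm St}_2$. So your route would force the sub-object $H^{d-j}(\P^d_K,\cE)'\otimes i^G_{P_{(j+1,d-j)}}({\rm St}_{d-j})$, strictly larger than the $H^{d-j}(\P^d_K,\cE)'\otimes v^G_{P_{(j+1,1,\ldots,1)}}$ asserted by the theorem; the false identity cancels this discrepancy notationally and masks the contradiction. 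What is true, and all that this paper uses, is that the \emph{quotient} term equals $\cF^G_{P_{\underline{j+1}}}(\tilde H^{d-j}_{\P^j_K}(\P^d_K,\cE),{\rm St}_{d-j})$; the extension of Theorem \ref{thm} does not arise by feeding the category-$\cO$ sequence through $\cF^G_P$, and producing it genuinely requires the geometric analysis of \cite{O}.
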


\begin{proof}
 This is \cite[Theorem 1]{O}.
\end{proof}

\noindent  
Here the $P_{\underline{j+1}}$-representation $U_j'$ is a tensor product 
$N_{j}' \otimes {\rm St}_{d-j}$ of an algebraic $P_{\underline{j+1}}$-representa\-tion $N_{j}'$ and ${\rm St}_{d-j}.$
The symbol $\frd_j$  indicates a system of differential equations depending on $N_j.$
The latter object is characterized  by the property that it generates the kernel of the 
natural  homomorphism $H^{d-j}_{\P^j_K} (\P^d_K, \cE) \rightarrow  H^{d-j}(\P^d_K,\cE)$ 
as a module with respect to $U(\frg)$.

This is exactly the starting point  of the main construction in  \cite{OS}. In fact the locally analytic $G$-representation  
$\Ind^G_{P_{\underline{j+1}}}(U_j')^{{\frd_j}}$ above can be characterized as the image of the object
$\tilde{H}^{d-j}_{\P^j_K} (\P^d_K ,\cE) \times {\rm St}_{d-j}$ under a functor
$$\cF^G_{P_{\underline{j+1}}}: \Oa^{\frp_{\underline{j+1}}}\times \Rep^{\infty}_K(L_{\underline{j+1}}) \longrightarrow  \Rep_K^{\ell a}(G),$$
i.e. 
$$\Ind^G_{P_{\underline{j+1}}}(U_j')^{{\frd_j}}=\cF^G_{P_{\underline{j+1}}}(\tilde{H}^{d-j}_{\P^j_K} (\P^d_K ,\cE),{\rm St}_{d-j}).$$
Here $\Rep^{\infty}_K(L_{\underline{j+1}})$ is the category of smooth  $L_{\underline{j+1}}$-representations with coefficients over $K$.

Let us  briefly recall the definition of this functor for an arbitrary parabolic subgroup $P\subset G$ with Levi decomposition $P=L\cdot U.$
Let $M$ be an object of $\Oa^\frp$. Then  there is a surjective map
\begin{equation*}
\phi:U(\frg) \otimes_{U(\frp)} W \rightarrow M
\end{equation*}
for some finite-dimensional  algebraic $P$-representation  $W\subset M$.
Let $V$ be a additionally a smooth $L$-representation.  We consider $V$ via the trivial action of $U$  as a $P$-representation.
As explained above the tensor product  representation $W' \otimes_K V$  is a locally analytic $P$-representation.
Then
$$\cF^G_{P}(M,V) = \Ind^G_{P}(W' \otimes_K V)^{\frd}$$
denotes the subset of functions $f\in \Ind^G_{P}(W' \otimes _K V)$ which are killed by the ideal $\frd=\ker(\phi).$ In loc.cit. it is shown 
that this subset is a well-defined $G$-stable  closed subspace of
$\Ind^G_{P}(W'\otimes_K V)$ and has therefore a natural structure of a locally analytic $G$-representation. The resulting functor 
is contravariant in the first and covariant in the second variable. 
It is proved in \cite[Prop. 4.10 a)]{OS} that $\cF^G_P$ is exact in both arguments.

Now we come to the functoriality aspect concerning the filtration (\ref{filtration}) mentioned in the introduction.

\begin{Lemma}\label{lemma_functorial}
 Let $f:\cE\to \cE'$ be a homomorphism of homogeneous vector bundles on $\P^d_K$. Then $f$ is compatible with the filtrations,
 i.e., $f$ induces  $G$-equivariant homomorphisms
  $\cE(\cX)^i \to \cE'(\cX)^i$, $i\geq 0.$
\end{Lemma}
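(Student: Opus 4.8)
The plan is to unwind the construction of the filtration (\ref{filtration}) from \cite{O} and to observe that every ingredient entering it is functorial in the coefficient bundle, so that $f$ respects it for formal reasons. Recall that $\cX=\P^d_K\setminus Y_{d-1}$, where $Y_i=\bigcup_{g\in G}g\cdot\P^i_K$ denotes the union of all $K$-rational linear subspaces of projective dimension $i$ (the $G$-orbit of $\P^i_K$), and that one has a nested chain of $G$-stable admissible open subsets
\[
\cX=\P^d_K\setminus Y_{d-1}\subseteq \P^d_K\setminus Y_{d-2}\subseteq\cdots\subseteq\P^d_K\setminus Y_0\subseteq\P^d_K .
\]
The filtration step $\cE(\cX)^j$ is then the image of the restriction map $H^0(\P^d_K\setminus Y_{d-1-j},\cE)\to H^0(\cX,\cE)=\cE(\cX)$, which is injective since $\cX$ is dense and $\cE$ is a vector bundle; in particular $\cE(\cX)^0=\cE(\cX)$ and $\cE(\cX)^d=H^0(\P^d_K,\cE)$, as in (\ref{filtration}).

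First I would record that $G$-equivariance is automatic: since $f$ is a morphism of homogeneous, and hence $G$-equivariant, vector bundles, the induced map on global sections over any $G$-stable admissible open commutes with the $G$-action. In particular $f_\cX\colon\cE(\cX)\to\cE'(\cX)$ is $G$-equivariant, and the only thing left to prove is the inclusion $f_\cX(\cE(\cX)^j)\subseteq\cE'(\cX)^j$ for every $j$.

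The core step is a diagram chase. For each $j$ the functoriality of $H^0(V,-)$ and the naturality of restriction to $\cX$ give a commutative square
\[
\begin{CD}
H^0(\P^d_K\setminus Y_{d-1-j},\cE) @>>> \cE(\cX)\\
@V{f}VV @VV{f_\cX}V\\
H^0(\P^d_K\setminus Y_{d-1-j},\cE') @>>> \cE'(\cX).
\end{CD}
\]
Chasing the image of the top arrow through the square shows at once that $f_\cX$ carries it into the image of the bottom arrow, i.e. $f_\cX(\cE(\cX)^j)\subseteq\cE'(\cX)^j$. No topological subtlety intervenes: the induced map $f_\cX$ is continuous for the Fr\'echet topologies and we only need containment in the closed subspace $\cE'(\cX)^j$, not that the image itself be closed.

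I expect the main obstacle to be purely one of bookkeeping: making sure that the image-of-restriction description above is indeed the filtration of \cite{O}, or, if one prefers the equivalent description through the local cohomology groups $H^{d-j}_{\P^j_K}(\P^d_K,\cE)$ and the objects $\tilde H^{d-j}_{\P^j_K}(\P^d_K,\cE)$ of (\ref{iso}), checking that $f$ is compatible with the connecting homomorphisms of the long exact sequence (\ref{long_exact}). The latter is exactly the naturality of local cohomology and of its long exact sequence in the coefficient sheaf, which is standard \cite{Ha2}; it guarantees that the induced map $\tilde H^{d-j}_{\P^j_K}(\P^d_K,\cE)\to\tilde H^{d-j}_{\P^j_K}(\P^d_K,\cE')$ respects the kernel/cokernel descriptions in (\ref{iso}), and hence the filtration. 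In either formulation the lemma reduces to the naturality of standard cohomological functors in their coefficients.
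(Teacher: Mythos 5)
Your diagram chase is formally fine, but it is applied to a filtration you have not shown to be the one in the statement, and in fact your proposed description of $\cE(\cX)^j$ is not correct. In \cite{O} the filtration is \emph{not} defined as images of restriction maps from the complements $\P^d_K\setminus Y_{d-1-j}$: it is pulled back, along the boundary map in the exact sequence $0\to H^0(\P^d_K,\cE)\to\cE(\cX)\to H^1_{\cY}(\P^d_K,\cE)\to H^1(\P^d_K,\cE)\to 0$, from the spectral-sequence filtration on $H^1_{\cY}(\P^d_K,\cE)$ obtained by applying $\Hom(i_\ast(\,-\,),\cE)$ to a fixed acyclic resolution of the constant sheaf $\Z$ on $\cY$ whose terms $\sideset{}{'}\prod_{g\in G/P_I}\Z_{g,I}$ are supported on families of linear subspaces and carry a local-constancy condition over the profinite sets $G/P_I$. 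Your description fails already on bookkeeping grounds: by Theorem \ref{thm} the graded piece $\gr^j$ is built from $\tilde H^{d-j}_{\P^j_K}(\P^d_K,\cE)$, i.e.\ from local cohomology in degree $d-j$ along \emph{$j$-dimensional} subspaces, induced from $P_{(j+1,d-j)}$; under your definition $\gr^j$ would be controlled by local cohomology along the $(d-1-j)$-dimensional strata of $Y_{d-1-j}$, which first occurs in degree $j+1$ (codimension $j+1$ instead of $d-j$), so the indexing is backwards and the two filtrations cannot coincide. Concretely, for $d=2$ your $\cE(\cX)^1$ would consist of sections extending across all $K$-rational points, whereas the actual $\cE(\cX)^1$ has graded quotient built from $\tilde H^1_{\P^1_K}(\P^2_K,\cE)$, i.e.\ hyperplane-type singularities (it contains, for instance, the image of $H^0(\P^2_K\setminus\P^1_K,\cE)$, which is \emph{not} in your $\cE(\cX)^1$). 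Moreover, even with the orientation fixed, images of single restriction maps cannot encode the locally-constant-families condition over $G/P_I$, nor the closedness of the filtration steps; the filtration is genuinely a spectral-sequence filtration.

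Your fallback — naturality of the long exact sequence (\ref{long_exact}) and of the kernel/cokernel description (\ref{iso}) — does not close the gap either: the objects $\tilde H^{d-j}_{\P^j_K}(\P^d_K,\cE)$ enter only in the description of the \emph{graded pieces} via the functor $\cF^G_{P_{\underline{j+1}}}$, so compatibility of $f$ with them does not produce maps $\cE(\cX)^j\to\cE'(\cX)^j$. The paper's actual argument is both unavoidable and simpler than what you attempt: the resolution of $\Z$ on $\cY$ depends only on the geometry of the hyperplane arrangement and not on the bundle, so applying $\Hom(i_\ast(\,-\,),\cE)$ is functorial in $\cE$; hence the spectral sequence, its abutment filtration on $H^1_{\cY}(\P^d_K,\cE)$, and the pullback filtration on $\cE(\cX)$ are all natural in $\cE$, which is exactly the lemma. (Your preliminary remark that $G$-equivariance of the induced map $\cE(\cX)\to\cE'(\cX)$ is automatic for a morphism of homogeneous bundles is correct.)
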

 
 \begin{proof}
 The definition of the filtration involves only the geometry of $\cX$ (being the complement of an hyperplane arrangement) and not the
 homogeneous vector bundle itself.
 In fact, the $K$-Fr\'echet space $\cE(\cX)=H^0(\cX,\cE)$ appears in an exact sequence
$$0\rightarrow H^0(\P_K^d,\cE) \rightarrow H^0(\cX,\cE)\rightarrow H^1_{\cY}(\P_K^d,\cE) \rightarrow H^1(\P_K^d,\cE)\rightarrow 0.$$
We consider the $K$-Fr\'echet space  $H^1_{\cY}(\P_K^d,\cE),$ where ${\cY}\subset \P^d_K$ is the "closed" complement of $\cX$ in $\P_K^d.$
The filtration is induced (by taking the preimage) by one on $H^1_{\cY}(\P_K^d,\cE)$ which we briefly review. 
Here all geometric  objects are considered as pseudo-adic spaces in the sense of \cite{Hu}.

Let $\{e_0,\ldots,e_d\}$ be the standard basis of $V=K^{d+1}$ and let $\Delta$ be the standard basis of simple roots with respect to the Borel subgroup of lower triangular matrices. 
For any $\alpha_i\in \Delta,$ put $V_{\alpha_i}=V_i=\bigoplus^i_{j=0} K\cdot e_j$ and set $Y_{\alpha_i}=Y_i=\P(V_i)$.
For any subset $I\subset \Delta$ with  $\Delta\setminus I=\{\alpha_{i_1} < \ldots < \alpha_{i_r}\},$ 
let $Y_I=Y_{\alpha_{i_1}}=\P(V_{i_1}).$ Furthermore, let $P_I$ be the lower parabolic subgroup of $G,$ such that  $I$
coincides with the set of simple roots appearing in the Levi factor of $P_I$. 
Then $gY_I$ is a closed subset of $\cY$ and we denote by $\Phi_{g,I} :gY_I \hookrightarrow \cY$ the corresponding  embedding. Let $\Z$ be the constant sheaf on $\cY$ 
and set  $\Z_{g,I}:=(\Phi_{g,I})_\ast(\Phi_{g,I}^\ast (\Z))$.
Then $$\sideset{}{'}\prod_{g\in G/P_I} \Z_{g,I} \subset \prod_{g\in G/P_I} \Z_{g,I}$$
denotes the subsheaf of locally constant sections with respect to the
topological space $G/P_I.$
In \cite[2.1]{O}  we proved that there is an acyclic resolution
\begin{multline}
\nonumber 0 \rightarrow \Z \rightarrow\!\!\! \bigoplus_{I \subset
\Delta \atop |\Delta\setminus I|=1} \sideset{}{'}\prod_{g\in G/P_I} \Z_{g,I}
\rightarrow \!\!\!\bigoplus_{I \subset \Delta \atop |\Delta\setminus I|=2}
\sideset{}{'}\prod_{g\in G/P_I} \Z_{g,I} \rightarrow \cdots  \\ \label{complex} \\ \nonumber \cdots\rightarrow
\!\!\!\bigoplus_{I \subset \Delta \atop |\Delta\setminus I|=d-1} \sideset{}{'}\prod_{g\in G/P_I} \Z_{g,I}
\rightarrow \sideset{}{'}\prod_{g\in G/P_\emptyset} \Z_{g,\emptyset} \rightarrow 0
\end{multline}
of the constant sheaf $\Z$ on $\cY.$
By applying the functor ${\rm Hom}(i_\ast(\;-\;),\cE)$ to this complex, we get a spectral sequence converging to 
$H_{{\cY}}^1(\P_K^d,\cE).$ Finally the filtration on $H^1_{\cY}(\P_K^d,\cE)$ is just the one induced by this spectral sequence. 
It follows now easily from the construction that $f$ is compatible with the filtrations on $\cE(\cX)$ and $\cE'(\cX).$
 \end{proof}

The de Rham complex (\ref{deRham}) together with   Lemma  \ref{lemma_functorial} induces complexes 
\begin{align*}
 0 \to \cO(\cX)^j/\cO(\cX)^{j+1} \to \Omega^1(\cX)^j/\Omega^1(\cX)^{j+1}  \to \cdots \to \Omega^d(\cX)^j/\Omega^d(\cX)^{j+1}  \to  0 ,
 \end{align*}
 $j=0,\ldots,d-1$,
which form just the $E_0$-term of the spectral sequence attached to the filtered de Rham complex (\ref{filtered_deRham}).
Apart from the terms $v^{G}_{{P_{(j+1,1,\ldots,1)}}}(H^{d-j}(\P^d_K,\Omega^i)')$, $i=0,\ldots,d$,  appearing in Theorem \ref{thm}, this complex 
coincides by what we observed above with the complex
\begin{multline}
 0 \to \cF^G_{P_{\underline{j+1}}}(\tilde{H}^{d-j}_{\P^j_K} (\P^d_K, \Omega^d),{\rm St}_{d-j} ) \to \cdots \\ \cdots \to 
 \cF^G_{P_{\underline{j+1}}}(\tilde{H}^{d-j}_{\P^j_K} (\P^d_K, \Omega^1),{\rm St}_{d-j}) \to
\cF^G_{P_{\underline{j+1}}}(\tilde{H}^{d-j}_{\P^j_K} (\P^d_K, \cO), {\rm St}_{d-j})  \to 0 
\end{multline}
\medskip

\begin{Proposition}
The above complex is acyclic.
\end{Proposition}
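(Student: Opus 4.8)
The plan is to deduce acyclicity from the exactness of the functor $\cF^G_{P_{\underline{j+1}}}$ together with a computation of algebraic de Rham cohomology with support. First I would use that $\cF^G_{P_{\underline{j+1}}}(-,{\rm St}_{d-j})$ is exact in its first argument (this is \cite[Prop.\ 4.10 a)]{OS}) and contravariant. Applying it to the de Rham complex on $\P^d_K$ therefore commutes with the formation of cohomology, so that
$$ H^k\big(\cF^G_{P_{\underline{j+1}}}(\tilde H^{d-j}_{\P^j_K}(\P^d_K,\Omega^\bullet),{\rm St}_{d-j})\big)\;\cong\;\cF^G_{P_{\underline{j+1}}}\big(H^{d-k}(\tilde C^\bullet),{\rm St}_{d-j}\big), $$
where $\tilde C^\bullet$ denotes the complex of $U(\frg)$-modules $\tilde C^i=\tilde H^{d-j}_{\P^j_K}(\P^d_K,\Omega^i)$ equipped with the differentials induced by the de Rham differential of $\P^d_K$. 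Since $\cF^G_{P_{\underline{j+1}}}$ sends the zero object to the zero object, the Proposition reduces to showing that $\tilde C^\bullet$ is acyclic.

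To analyse $\tilde C^\bullet$ I would first treat the unreduced complex $C^\bullet=(H^{d-j}_{\P^j_K}(\P^d_K,\Omega^\bullet))$, which is exactly the row $q=d-j$ of the hypercohomology spectral sequence $E_1^{p,q}=H^q_{\P^j_K}(\P^d_K,\Omega^p)$ of $R\Gamma_{\P^j_K}$ applied to the de Rham complex, converging to the de Rham cohomology with support $H^{p+q}_{\P^j_K,\mathrm{dR}}(\P^d_K)$. By (\ref{cases}) the rows $q<d-j$ vanish, while for $q>d-j$ the terms reduce to the ordinary Hodge cohomology $H^q(\P^d_K,\Omega^p)=K\cdot\delta_{pq}$ of projective space. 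Purity for the smooth closed subvariety $\P^j_K\subset\P^d_K$ of codimension $d-j$ identifies the abutment with $H^{\ast-2(d-j)}_{\mathrm{dR}}(\P^j_K)$, a sum of one-dimensional Hodge--Tate pieces sitting in the diagonal bidegrees $(d-j+i,d-j+i)$, $i=0,\ldots,j$. The pieces with $i\ge 1$ are already supplied by the surviving diagonal entries of the rows $q>d-j$; since no differential can leave the row $q=d-j$ (its target would lie in a vanishing row) and any higher differential entering it would have to originate from one of those diagonal classes, each of which must survive, all higher differentials vanish and $E_2=E_\infty$. Comparing with the one-dimensional abutment in each total degree then forces $H^p(C^\bullet)=E_2^{p,d-j}$ to be one-dimensional for $p=d-j$ and to vanish otherwise.

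Finally I would pass from $C^\bullet$ to $\tilde C^\bullet$, which by (\ref{iso}) is the kernel of the morphism of complexes $C^\bullet\to H^{d-j}(\P^d_K,\Omega^\bullet)$ (the target being $K$ placed in degree $d-j$) induced by the edge map $H^{d-j}_{\P^j_K}(\P^d_K,-)\to H^{d-j}(\P^d_K,-)$. On $H^{d-j}$ this morphism is the degree-$(d-j)$ Hodge graded piece of the natural map $H^{2(d-j)}_{\P^j_K,\mathrm{dR}}(\P^d_K)\to H^{2(d-j)}_{\mathrm{dR}}(\P^d_K)$, i.e.\ the cycle class map sending the fundamental class of $\P^j_K$ to $h^{d-j}\neq 0$; hence it is an isomorphism $K\xrightarrow{\ \sim\ }K$. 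The long exact cohomology sequence of $0\to\tilde C^\bullet\to C^\bullet\to H^{d-j}(\P^d_K,\Omega^\bullet)\to 0$ then shows that $\tilde C^\bullet$ is acyclic, and by the first paragraph the Proposition follows. I expect the middle step to be the main obstacle: the terms of $C^\bullet$ are the infinite-dimensional local cohomology modules $H^{d-j}_{\P^j_K}(\P^d_K,\Omega^p)$, so it is genuinely the de Rham differential that must collapse the whole row down to a single one-dimensional group. Making this precise requires the purity and Hodge--Tate placement input and a careful check that no higher differential interferes, rather than a naive appeal to $E_1$-degeneration, which is unavailable here because $E_1$ is not finite-dimensional.
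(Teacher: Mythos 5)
Your proposal is correct in outline, and its first and last steps essentially coincide with the paper's: the reduction via exactness and contravariance of $\cF^G_{P_{\underline{j+1}}}$ in its first argument is exactly how the paper begins, and your passage from $C^\bullet$ to $\tilde{C}^\bullet$ matches the role of (\ref{iso}) there. The genuinely different part is the middle computation. The paper never touches the local cohomology spectral sequence: using (\ref{iso}) it rewrites $\tilde{H}^{d-j}_{\P^j_K}(\P^d_K,\Omega^i)$ as the cokernel of $H^{d-j-1}(\P^d_K,\Omega^i)\to H^{d-j-1}(V,\Omega^i)$ for the open complement $V=\bigcup_{k=j+1}^d D(T_k)$, and then runs the \v{C}ech--de Rham double complex over this cover by $d-j$ affines, whose abutment $H^\ast_{\rm dR}(V)=\bigoplus_{i=0}^{d-j-1}K[-2i]$ is supplied by Grothendieck's comparison with Betti cohomology. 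There the survival of the diagonal classes is elementary: they are restrictions of classes from the \v{C}ech--de Rham double complex of $\P^d_K$, whose $E_1$-term is concentrated on the diagonal, so they are images of permanent cycles and hence permanent. Your route buys a more intrinsic statement (no choice of cover, everything inside $R\Gamma_{\P^j_K}$), at the price of importing purity for de Rham cohomology with supports \emph{together with its Hodge-filtration placement}.

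That placement is exactly where I would press you, and you flag it yourself: the assertion ``each of which must survive'' does not follow from convergence bookkeeping alone. The alternative scenario in which the diagonal class at $(d-j+i,\,d-j+i)$ is killed by $d_{i+1}$ landing in $(d-j+2i+1,\,d-j)$, with $H^{d-j+2i}(C^\bullet)=H^{d-j+2i+1}(C^\bullet)=K$ then accounting for the abutment in total degree $2(d-j)+2i$, is numerically consistent with unfiltered purity. So you genuinely need the filtered statement (graded pieces concentrated in bidegree $(q,q)$), and there is a circularity risk: if the filtration on $H^\ast_{\P^j_K,{\rm dR}}(\P^d_K)$ is the one induced by this very spectral sequence, the placement claim \emph{is} the degeneration you are proving, so it must come from an independent source (classical Hodge theory of the pair via Lefschetz/GAGA, or mixed Hodge theory with supports) --- true, but far heavier than anything the paper uses. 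An elementary repair in the spirit of the paper: your spectral sequence is a module over the Hodge--de Rham spectral sequence of $\P^d_K$, which degenerates for position reasons, and the diagonal classes are $h^i\cdot\xi$ with $\xi\in H^{d-j}_{\P^j_K}(\P^d_K,\Omega^{d-j})$ the $d$-closed supported cycle class lying in the row $q=d-j$, out of which no differential can leave; hence all diagonal classes are permanent cycles. The same cycle-class nonvanishing ($\xi\mapsto h^{d-j}\neq 0$) is needed, as you say, in your final long-exact-sequence step, where you should also record that $C^{d-j}\to H^{d-j}(\P^d_K,\Omega^{d-j})$ is surjective (it is, since $H^{d-j}(V,\Omega^{d-j})=0$ because $V$ is covered by $d-j$ affines), so that your short exact sequence of complexes actually exists.
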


\begin{proof}
By the exactness of the functor $\cF^G_P$ in the first entry it suffices to prove that the complex
 $$0 \to \tilde{H}^{d-j}_{\P^j_K} (\P^d_K, \cO) \to \tilde{H}^{d-j}_{\P^j_K} (\P^d_K, \Omega^1) \to 
 \cdots \to \tilde{H}^{d-j}_{\P^j_K} (\P^d_K, \Omega^d) \to 0 $$
of $\frg$-modules is acyclic. Set $V=\P^d_K\setminus \P^j_K= \bigcup_{k=j+1}^d D(T_k).$ Then by identity (\ref{iso}) we have
the description
$$\tilde{H}^{d-j}_{\P^j_K} (\P^d_K, \Omega^i)\cong 
{\rm coker} (H^{d-j-1}(\P^d_K, \Omega^i) \to H^{d-j-1} (V, \Omega^i))$$
for all $i\geq 0.$ 
On the other hand,
we have the following well-known  chain of identities 
\begin{equation}\label{equations}
 K=H^0(\P^d_K,\cO)=H^1(\P^d_K,\Omega^1)=\cdots = H^d(\P^d_K,\Omega^d),
\end{equation}
cf. \cite{Ha1}. All other cohomology groups vanish.
Therefore it is enough to prove that the cohomology in degree $d-j-1$
of the complex
$$0 \to H^{d-j-1} (V, \cO) \to  H^{d-j-1} (V, \Omega^1) \to \cdots \to  H^{d-j-1}(V, \Omega^d) \to 0$$ induces
$H^{d-j-1}(\P^d_K, \Omega^{d-j-1})=K$ and vanishes elsewhere. For this issue, we consider the double complex

\vspace{0.5cm}

\label{bigdoublecomplex}
$$
\begin{array}{ccccc}
\bigoplus\limits_{k=j+1}^d \Omega^d(D(T_k))  &  \rightarrow &  \bigoplus\limits_{j+1\leq k<l \leq d} \Omega^d(D(T_k)\cap D(T_l)) &
\rightarrow \dots \rightarrow & \Omega^d(\bigcap\limits_{k=j+1}^d D(T_k))  \\
\uparrow & & \uparrow &   &\uparrow  \\
\vdots &  & \vdots &  & \vdots  \\
\uparrow & & \uparrow &   &\uparrow   \\
\bigoplus\limits_{k=j+1}^d \Omega^i(D(T_k))  & \rightarrow &  \bigoplus\limits_{j+1\leq k<l \leq d} \Omega^i(D(T_k)\cap D(T_l)) &
\rightarrow \dots \rightarrow & \Omega^i(\bigcap\limits_{k=j+1}^d D(T_k)) \\
\uparrow & & \uparrow &   &\uparrow \\
\vdots  & & \vdots  & &\vdots   \\
\uparrow & & \uparrow &   &\uparrow   \\
\bigoplus\limits_{k=j+1}^d \Omega^1(D(T_k))  & \rightarrow & \bigoplus\limits_{j+1\leq k<l \leq d} \Omega^1(D(T_k)\cap D(T_l))  &
\rightarrow \dots \rightarrow & \Omega^1(\bigcap\limits_{k=j+1}^d D(T_k))  \\
\uparrow & & \uparrow &   &\uparrow   \\
\bigoplus\limits_{k=j+1}^d \cO(D(T_k)) & \rightarrow & \bigoplus\limits_{j+1\leq k<l \leq d} \cO(D(T_k)\cap D(T_l)) &
\rightarrow \dots \rightarrow & \cO(\bigcap\limits_{k=j+1}^d D(T_k))  \\
\end{array}
$$
\vspace{0.3cm}

\noindent whose total complex  gives rise to the de Rham cohomology of $V$, cf. \cite{Gr}. Since 
$H^{k}_{\P^j_K} (\P^d_K, \Omega^i)=0$ for all  $k<d-j$
by identity (\ref{cases}), 
we see  that $H^k(V,\Omega^i)=H^k(\P^d,\Omega^i)$ for all such indices $k.$ 
Evaluating the double complex along the horizontal lines we get thus the  $E_1$-term:

$$
\begin{array}{ccccccc}
0 &  \rightarrow &  0&
\rightarrow \dots \rightarrow & 0 & \rightarrow &   H^{d-j-1} (V, \Omega^d)   \\
\uparrow & & \uparrow &   &\uparrow & &\uparrow  \\
\vdots &  & \vdots &  & \vdots & & \vdots \\
\uparrow & & \uparrow &   &\uparrow &  &\uparrow   \\
0 & \rightarrow &  0 &
\rightarrow \dots \rightarrow & K & \rightarrow  & H^{d-j-1} (V, \Omega^{d-j-2})  \\
\uparrow & & \uparrow &   &\uparrow & & \uparrow  \\
\vdots  & & \vdots  & \rotatebox{75}{$\ddots$} &\vdots&  & \vdots \\
\uparrow & & \uparrow &   &\uparrow & &\uparrow    \\
0  & \rightarrow & K   &
\rightarrow \dots \rightarrow & 0 & \rightarrow  & H^{d-j-1} (V, \Omega^1)  \\
\uparrow & & \uparrow &   &\uparrow & &\uparrow   \\
K & \rightarrow & 0 &
\rightarrow \dots \rightarrow & 0 & \rightarrow & H^{d-j-1} (V, \cO)  \\
\end{array}
$$

\medskip
But the de Rham cohomology of  $V$ is easily computed in another way. In fact, using the comparison isomorphism with Betti cohomology \cite{Gr} and the long exact
cohomology sequence for constant coefficients (\ref{long_exact}), we see that  
$H^\ast_{\rm dR}(V)=\bigoplus_{i=0}^{d-j-1} K[-2i].$
 The claim follows now easily.
 \end{proof}

For the proof of Theorem 1, we recall that $E^{p,q}_0=\gr^p(\Omega^{p+q}(\cX)) \Rightarrow H^{p+q}(\Omega^\bullet(\cX))$ is the induced spectral sequence of our 
 filtered de Rham complex.

\begin{Corollary}
The $E_1$-term of the above spectral sequence has the shape
$$E^{p,q}_1=\begin{cases}
            \Hom_K(v^G_{P_{(d+1-p,1,\ldots,1)}},K) & q=0 \\ 0 & q\neq 0
           \end{cases}$$
 for $p\geq 0.$
Hence it degenerates at $E_1$ and we get the formula (\ref{cohomology}).
\end{Corollary}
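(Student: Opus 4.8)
The plan is to compute the $E_1$-page one filtration degree at a time and then read off the abutment. Fix $p$ and look at the $p$-th graded piece of the filtered de Rham complex (\ref{filtered_deRham}), i.e.\ the complex $\gr^p\Omega^\bullet(\cX)$ with term $\Omega^i(\cX)^p/\Omega^i(\cX)^{p+1}$ in de Rham degree $i$. The functoriality of the filtration---whose proof (Lemma~\ref{lemma_functorial}) uses only the geometry of $\cX$ and hence applies verbatim to the (non-$\cO$-linear) de Rham differential---shows that $d_0$ is the de Rham differential induced on graded pieces, so that $E_1^{p,q}=H^{p+q}(\gr^p\Omega^\bullet(\cX))$. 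First I would pass to strong duals: as the $\Omega^i(\cX)$ are reflexive Fr\'echet spaces with strict transition maps, dualising is exact and commutes with the formation of cohomology. Applying Theorem~\ref{thm} in each degree then realises the dualised column as the middle term of a short exact sequence of complexes
\begin{equation}\nonumber
0 \to A^\bullet_p \to \big(\gr^p\Omega^\bullet(\cX)\big)' \to B^\bullet_p \to 0,
\end{equation}
where $A^\bullet_p$ has term $v^G_{P_{(p+1,1,\ldots,1)}}(H^{d-p}(\P^d_K,\Omega^i)')$ in degree $i$ and $B^\bullet_p$ is the complex $\cF^G_{P_{\underline{p+1}}}\big(\tilde H^{d-p}_{\P^p_K}(\P^d_K,\Omega^i),{\rm St}_{d-p}\big)$.

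The two ingredients needed to evaluate this are precisely the preceding Proposition and the Hodge cohomology of projective space. The Proposition asserts that $B^\bullet_p$ is acyclic, so the long exact cohomology sequence identifies $H^\ast\big((\gr^p\Omega^\bullet(\cX))'\big)$ with $H^\ast(A^\bullet_p)$. By the chain of identities (\ref{equations}) the group $H^{d-p}(\P^d_K,\Omega^i)$ equals $K$ for $i=d-p$ and vanishes otherwise, so $A^\bullet_p$ has a single nonzero term, placed in de Rham degree $d-p$ and equal to $v^G_{P_{(p+1,1,\ldots,1)}}$. Dualising back, the $p$-th column of $E_1$ is concentrated in total degree $d-p$ with value $\Hom_K(v^G_{P_{(p+1,1,\ldots,1)}},K)$; writing $n=d-p$ for the total degree, this is the asserted $\Hom_K(v^G_{P_{(d+1-n,1,\ldots,1)}},K)$.

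Degeneration is then formal. Each filtration column contributes in exactly one total degree, and the resulting nonzero entries of $E_1$ are spread across the $(p,q)$-plane so sparsely that, for every $r\ge 1$, at least one of the source and the target of $d_r\colon E_r^{p,q}\to E_r^{p+r,q-r+1}$ vanishes; a direct comparison of bidegrees rules out any nonzero higher differential. Hence $E_r=E_1$ for all $r\ge 1$, the spectral sequence degenerates at $E_1$, and in each total degree $n$ the abutment $H^n_{\rm dR}(\cX)$ equals the unique surviving term $\Hom_K(v^G_{P_{(d+1-n,1,\ldots,1)}},K)$. Summing over $n$ recovers formula (\ref{cohomology}).

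The step I expect to be the main obstacle is turning Theorem~\ref{thm} into an honest short exact sequence of complexes rather than a collection of unrelated extensions: one must check that the extensions in consecutive de Rham degrees are compatible with the de Rham differential, and that passage to strong duals is exact on these Fr\'echet and locally analytic representations so that cohomology may legitimately be computed after dualising. Once these compatibilities are in place, the acyclicity supplied by the preceding Proposition together with the Hodge computation (\ref{equations}) makes the evaluation of $E_1$ immediate, and the degeneration reduces to the bookkeeping of bidegrees described above.
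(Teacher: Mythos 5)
Your proposal is correct and takes essentially the same route as the paper, whose own justification of the Corollary is exactly the combination you use: Theorem~\ref{thm} to split the dualized graded column into the $v$-part and the $\cF$-part, the preceding Proposition for acyclicity of the $\cF$-part, and the identities (\ref{equations}) to locate the single surviving term, followed by formal bidegree bookkeeping. Your extra care---checking that the extensions assemble into a short exact sequence of complexes compatible with the de Rham differential, that passing to strong duals of these reflexive Fr\'echet spaces is legitimate, and that in the literal indexing the nonzero entries sit at $q=d-2p$ (reconciled with the stated $q=0$ shape via $n=d-p$, which is what actually makes degeneration at $E_1$ formal)---only makes explicit points the paper leaves implicit.
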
 

This finishes the proof of Theorem 1. \qed

\section{A generalization: The dual BGG complex}


In this final section we consider a generalization of what we have done before. We replace the de Rham complex 
(\ref{deRham}) by the dual BGG complex attached to an algebraic representation in the 
sense of Faltings \cite{Fa, FC} respectively Schneider \cite{S}. For introducing this complex we have to introduce
some more notation.

Let ${\bf T\subset G}$ be the diagonal torus and let ${\bf B\subset G}$ be the Borel subgroup of lower triangular
matrices. Denote by $\Phi \subset X^\ast({\bf T})$ the corresponding set of roots of ${\bf G}$.
Let ${\bf B^+ \subset G}$ the Borel subgroup of upper triangular matrices and 
and let $\Delta^+ \subset \Phi$ be the set of simple roots with respect to $B^+.$
We consider the set
$$X^+=\{\lambda \in  X^\ast({\bf T}) \mid (\lambda,\alpha^\vee )\geq 0 \; \forall \alpha \in \Delta^+ \}$$
of dominant weights in $X^\ast({\bf T}).$
For $\lambda \in X^+$, we denote by  $V(\lambda)$ the finite-dimensional irreducible algebraic ${\bf G}$-representation 
over $K$ of highest weight $\lambda,$ cf. \cite{Ja}. We consider $V(\lambda)$ as an $G$-representation in the sequel.

Let ${\bf P_{(1,d)}}$ be the stabilizer of the base point $[1:0:\cdots:0] \in
\P^d_K(K)$ and let  ${\bf L}={\bf L_{(1,d)}} \subset {\bf
P_{(1,d)}}$ be the Levi subgroup.
Further let 
$$X^+_L=\{\lambda \in X^\ast({\bf T}) \mid (\lambda,\alpha) \geq 0 \;\forall \alpha \in \Delta_L^+\}$$
be the set of ${\bf L}$-dominant weights where $\Delta^+_L \subset \Delta$ consist of those simple roots which
appear in ${\bf L}$. Every $\lambda\in X^+_L$ gives rise to a  finite-dimensional irreducible  algebraic  ${\bf L}$-representation
$V_L(\lambda).$
We consider it as a ${\bf P}$-module by letting act the unipotent radical trivially on it.
Let
$$\pi:{\bf G} \rightarrow {\bf G/P_{(1,d)}}$$ be the projection map
and identify ${\bf G /P_{(1,d)}}$ with $\P^d_K.$ Let $V$ be a
finite-dimensional algebraic representation of ${\bf P_{(1,d)}}.$
For a Zariski open subset $U\subset \P^d_K$, put
\begin{eqnarray*}\label{homvectorbundle}
\cE_V(U):= &\Big \{ & \mbox{algebraic morphisms } f:\; \pi^{-1}(U)
\rightarrow V \;\mid \; f(gp)\,=\,p^{-1}f(g)\; \mbox{ for all } \\ &
& g \in {\bf G}(\overline{K}), p\in {\bf
P_{(1,d)}}(\overline{K}) \Big\}.
\end{eqnarray*}
Then $\cE_V$ defines a homogeneous vector bundle on $\P^d_K$. We consider it at the same time as such an object over the rigid-analytic space $(\P^d_K)^{\rm rig}$. 
If $\lambda \in X^+_L$ then we set $\cE_\lambda:=\cE_{V_L(\lambda)}.$

Let $W$ be the Weyl group of ${\bf G}$ and consider the dot action $\cdot$  of $W$ on $X^\ast({\bf T})$  given by
$$w\cdot \chi= w(\chi+\rho)-\rho,$$
where $\rho=\frac{1}{2}\sum_{\alpha \in \Phi^+} \alpha.$
Let $W_L\subset W$ be the Weyl group of $L.$ Consider the set $^LW=W_L\backslash W$
of left cosets and the cycles
$$w_i:=(1,2,3,\ldots, i+1) \in S_{d+1}\cong  W,$$
$i=0,\ldots,d$, which are just the representatives of shortest length in their cosets. 
If $\lambda \in X^+$ and $w\in {}^LW$ then
$w\cdot \lambda \in X_L^+$.
The dual BGG-complex of $\lambda \in X^+$ is given by the complex 
\begin{equation}
0 \rightarrow \underline{V}(\lambda) \rightarrow \cE_\lambda \rightarrow \cE_{w_1\cdot \lambda} \rightarrow \cdots \rightarrow
\cE_{w_d\cdot \lambda} \rightarrow 0.  
\end{equation}
Here $\underline{V}(\lambda)$ is the constant sheaf on $\P^d_K$ with values in $V(\lambda).$ By considering sections
in $\cX$ we get a complex 
\begin{equation}
0 \rightarrow V(\lambda) \rightarrow \cE_\lambda(\cX) \rightarrow \cE_{w_1\cdot \lambda}(\cX) \rightarrow \cdots \rightarrow
\cE_{w_d\cdot \lambda}(\cX) \rightarrow 0.  
\end{equation}
It is proved in \cite{S} that the latter one is quasi-isomorphic to the complex $\Omega^\bullet(\cX) \otimes V(\lambda).$
The classical case is \cite{Fa,FC}.
For $\lambda=0$,  we get the usual de Rham complex.

\begin{proof} (of Theorem 1')
 The proof is the same as above. Instead of the series of identities (\ref{equations})
 we use this time the Borel-Weil-Bott theorem, cf. \cite{Ja}. Indeed by considering the spectral sequence 
 $(R^m{\rm ind}^G_P)(R^n{\rm ind}^P_B)(M) \Rightarrow R^n{\rm ind}^G_B(M)$, cf. 
 \cite[Prop. 4.5 c)]{Ja} we deduce that $H^i(\P^d_K,\cE_{w\cdot \lambda})=H^i(G/B,\cL_{w \cdot \lambda})$ since 
 $w\cdot \lambda\in X^+_L$ is $L$-dominant. Here $\cL_{w \cdot \lambda}$ is the line bundle on $G/B$ attached to
 the weight $\lambda$.
 Hence we get 
 $$H^i(\P^d_K,\cE_{w_j\cdot \lambda})=\begin{cases}
            H^0(\P^d_K,\cE_{\lambda}) & i=j \\ 0 & i\neq j .
           \end{cases}$$
 Moreover, the latter object has the description $H^0(\P^d_K,\cE_{\lambda})=V(\lambda).$ 
 As for the interpretation of the de Rham cohomology of $V$ we use the fact \cite{Fa,FC}
 that the complex $\cE_{\bullet\cdot \lambda}(V)$ is quasi-isomorphic to $V(\lambda)\otimes \Omega^\bullet(V)$ instead. 
 The claim follows. \end{proof}

\vspace{1.5cm}
\noindent {\it Acknowledgments:}
I wish to thank Ehud de Shalit and Matthias Strauch for their remarks on this paper.

\end{document}